\newtheorem{thm}{Theorem}% [section]
\theoremstyle{definition}
\newtheorem{prb}{Problem}
\newcounter{num} 
\newcommand{\Fg}[1][]{\thenum}
\newcommand\Al{\alpha}
\newcommand\B{\beta}
\newcommand\G{\gamma}
\newcommand\CA{\mathcal{A}}
\newcommand\CB{\mathcal{B}}
\definecolor{pink}{rgb}{1.0,.4,.0} %gold \G(n)
\definecolor{green2}{rgb}{.7,.7,.7} % Ep2
\definecolor{pink2}{rgb}{1.,.8,.1} %7 pink Ep4
\definecolor{blue2}{rgb}{.6,.5,1} % Ep5
\definecolor{blue3}{rgb}{.4,.6,.2} %Ep6
\definecolor{pink3}{rgb}{.7,.4,.4} %5 Ep3
\definecolor{mag}{rgb}{.6,.0,1.0} % \G
\definecolor{rd}{rgb}{1.,.0,.0} % \Al
\begin{document}
% ===================
%Sangaku Journal of Mathematics (SJM) \copyright SJM \\
%ISSN 2534-9562 \\
%Volume 2 (2018), pp.3--5  \\
%Received 16 February 2018 Published on-line 20 February 2018 \\ 
%web: \url{http://www.sangaku-journal.eu/} \\
%\copyright The Author(s) This article is published 
%with open access\footnote{This article is distributed under the terms of the Creative Commons Attribution License which permits any use, distribution, and reproduction in any medium, provided the original author(s) and the source are credited.}. \\
% ===========================   
\bigskip
%\bigskip

\begin{center}
{\Large \textbf{A note on a problem invoving a square in a 
curvilinear triangle}} \\
\medskip
\bigskip
\textsc{Hiroshi Okumura} \\

%Takahanadai Maebashi Gunma 371-0123, Japan \\
%e-mail: \href{mailto:hokmr@protonmail.com}{hokmr@protonmail.com} \\

\end{center}
\bigskip

% ==============================
\textbf{Abstract.} A problem involving a square in the 
curvilinear triangle made by two touching congruent circles and their common 
tangent is generalized.

%\bigskip
%\bigskip
\medskip
\textbf{Keywords.} square in a curvilinear triangle

\medskip
\textbf{Mathematics Subject Classification (2010).} 01A27, 51M04 

\bigskip\bigskip

%\section{Introduction}

Let $\Al_1$ and $\Al_2$ be touching circles of radius $a$ with external 
common tangent $t$. In this note we consider the following  problem  
\cite{ad, ertts, yjkgh} 
(see Figure \ref{fpr}). 
%Ito580/606
% Fukushoma 1861
% Fr 'Í—vƒ`ƒFƒbƒN
%szk—é–؂͉ï"c'©H
%•Ÿ"‡, ˆÉ"¡, "nê, –L—R, •"c, yd '͐V'µ'¢'̂ō폜
%ŽZ'¥'Í1845'±'ë'ō폜

\begin{prb}\label{p1}
{\rm $ABCD$ is a square such that the side $DA$ lies on $t$ and the 
points $C$ and $B$ lie on $\Al_1$ and $\Al_2$, respectively. Show that 
$2a=5|AB|$. }
\end{prb}

%The date of the sangaku problem in \cite{fksm} is 1861. 

\medskip
\begin{center}
\includegraphics[clip,width=95mm]{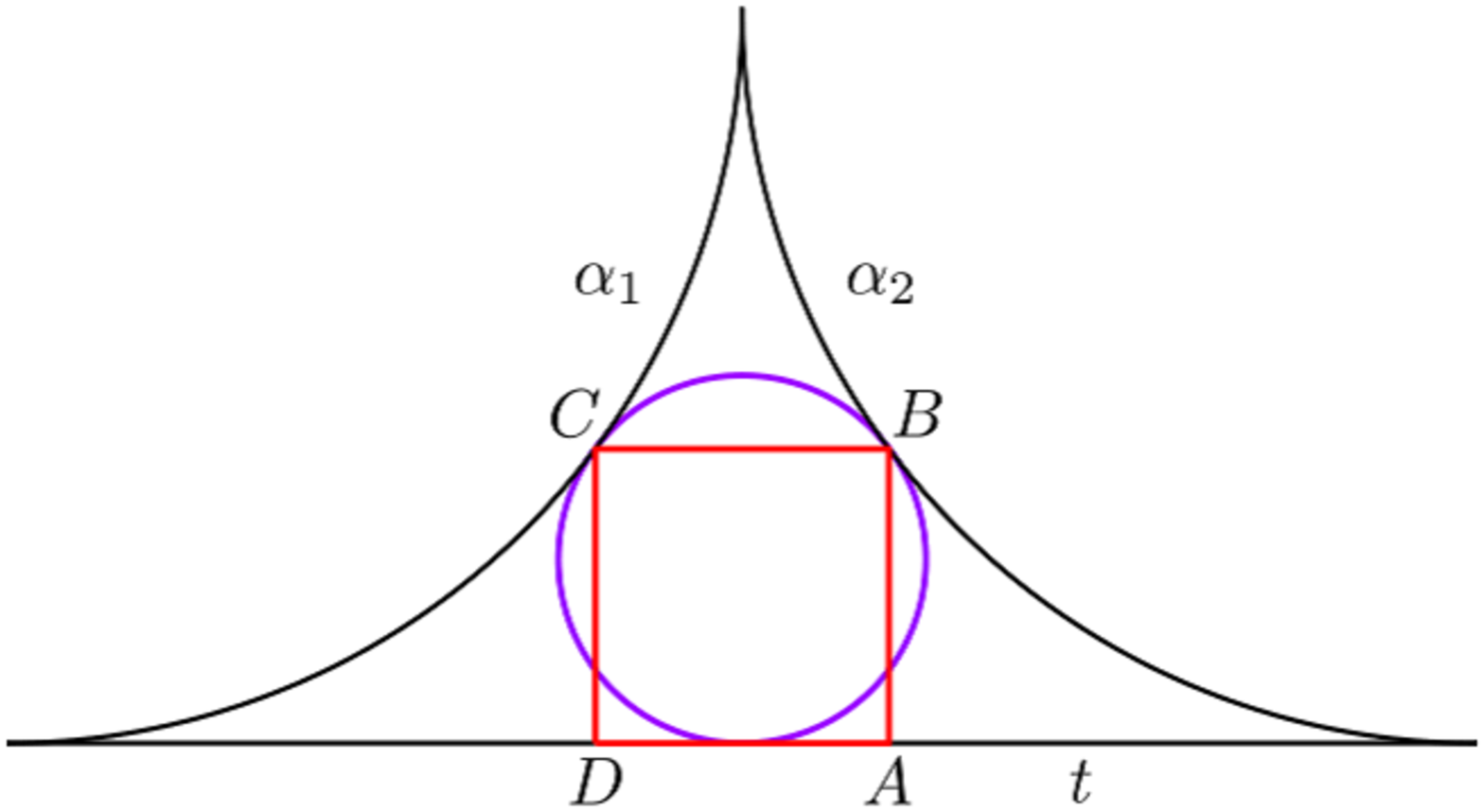}\refstepcounter{num}\label{fpr}\\
\medskip
Figure \Fg . 
\end{center}

%%%%%%%%%%%%%%%%%%%%%%%%%%%%%%%%%%%%%%%%%%%%%%%%%%%%%%%%%%%%%%%%%%%%%%%
%\section{Generalization}

%The curvilinear triangle made by $\De_i$ $(i=1,2,3)$ is denoted by 
%$T(\De_1,\De_2,\De_3)$, where $\De_i$ is a line segment or a circular 
%arc. 
If $\G_1$, $\G_2$, $\cdots$, $\G_n$ are congruent circles touching 
a line $s$ from the same side such that $\G_1$ and $\G_2$ touch 
and $\G_i$ $(i=3,4,\cdots, n)$ touches $\G_{i-1}$ from the side opposite 
to $\G_1$, then $\G_1$, $\G_2$, $\cdots$, $\G_n$ are called congruent 
circles on $s$. 
The curvilinear triangle made by $\Al_1$, $\Al_2$ and $t$ is denoted by 
$\Delta$. The incircle of $\Delta$ touches $\Al_1$ and $\Al_2$ at $C$ 
and $B$, respectively as in Figure \ref{fpr}. Indeed the problem is 
generalized as follows (see Figure \ref{f5}):

\begin{thm}\label{t1}
If $\B_1$, $\B_2$, $\cdots$, $\B_n$ are congruent circles on $t$ lying in 
$\Delta$ such that $\B_1$ touches $\Al_1$ at a point $C$ and $\B_n$ touches 
$\Al_2$ at a point $B$ and $A$ is the foot of perpendicular from $B$ to $t$, 
then the following relations hold. \\ 
\noindent{\rm (i)} $n|AB|=|BC|$. \\
\noindent{\rm (ii)} $2a=\left(\left(\sqrt{n}+1\right)^2+1\right)|AB|$. 
\end{thm}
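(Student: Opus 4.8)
The plan is to work in Cartesian coordinates. Put $t$ on the $x$-axis and take the centres of $\alpha_1$ and $\alpha_2$ to be $(-a,a)$ and $(a,a)$; then $\alpha_1$ and $\alpha_2$ touch $t$ at $(-a,0)$ and $(a,0)$, they touch each other at $(0,a)$, and $\Delta$ is the region above $t$ lying outside both circles. Let $r$ be the common radius of $\beta_1,\dots,\beta_n$. Each $\beta_i$ touches $t$ from above, so its centre lies on the line $y=r$; and since consecutive circles are congruent and externally tangent, consecutive centres differ by $2r$ in their $x$-coordinates. I shall use repeatedly the elementary fact (immediate on expanding $d^2+(u-v)^2=(u+v)^2$) that two circles of radii $u$ and $v$ tangent to a line on the same side and externally tangent to each other meet that line at two points a distance $2\sqrt{uv}$ apart.

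First I locate the chain. Applied to $\alpha_1$ and $\beta_1$, the fact says the point where $\beta_1$ touches $t$ is at distance $2\sqrt{ar}$ from $(-a,0)$; as $\beta_1$ lies in $\Delta$ it is to the right, so the centre of $\beta_1$ is $\bigl(2\sqrt{ar}-a,\,r\bigr)$, and symmetrically the centre of $\beta_n$ is $\bigl(a-2\sqrt{ar},\,r\bigr)$. But the centre of $\beta_n$ is also the centre of $\beta_1$ translated to the right by $2(n-1)r$. Equating the two expressions gives the master relation
\[
(n-1)r=a-2\sqrt{ar}.
\]
Squaring and rearranging yields $a^2-2(n+1)ar+(n-1)^2r^2=0$, whose roots are $a=(\sqrt n\pm1)^2r$. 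Since $a-(n-1)r=2\sqrt{ar}>0$ we must have $a>(n-1)r$, which excludes the root $(\sqrt n-1)^2r$ (it equals $0$ when $n=1$ and is $\le(n-1)r$ when $n\ge2$); hence $a=(\sqrt n+1)^2r$.

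It remains to find $B$, $C$ and $A$. The point $C$ where $\beta_1$ meets $\alpha_1$ lies on the segment between their centres at distance $r$ from the centre of $\beta_1$; a short computation from the coordinates above gives $C=\bigl(-\tfrac{a(\sqrt a-\sqrt r)^2}{a+r},\,\tfrac{2ar}{a+r}\bigr)$, and the left--right symmetry of the configuration gives $B=\bigl(\tfrac{a(\sqrt a-\sqrt r)^2}{a+r},\,\tfrac{2ar}{a+r}\bigr)$. Consequently $A=\bigl(\tfrac{a(\sqrt a-\sqrt r)^2}{a+r},\,0\bigr)$, so $|AB|=\tfrac{2ar}{a+r}$ and $|BC|=\tfrac{2a(\sqrt a-\sqrt r)^2}{a+r}$. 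Now assertion (i), namely $n|AB|=|BC|$, is equivalent to $nr=(\sqrt a-\sqrt r)^2=a-2\sqrt{ar}+r$, i.e.\ to the master relation $(n-1)r=a-2\sqrt{ar}$; and assertion (ii), namely $2a=\bigl((\sqrt n+1)^2+1\bigr)|AB|$, is equivalent to $a+r=\bigl((\sqrt n+1)^2+1\bigr)r$, i.e.\ to $a=(\sqrt n+1)^2r$. Both have just been proved.

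The whole argument is a routine coordinate computation; the only points that call for care are the sign choices (that $\beta_1$ sits to the right of the contact of $\alpha_1$ with $t$ and $\beta_n$ to the left of the contact of $\alpha_2$, and that one picks the correct root of the quadratic), the little tangency lemma, and the remark that the master relation pins down $r$ in terms of $a$ and $n$, so the configuration --- consistently with the reflective symmetry forcing $B$ and $C$ to be mirror images --- is essentially unique. I do not expect any genuine obstacle beyond this bookkeeping.
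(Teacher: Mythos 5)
Your proof is correct, and it differs from the paper's in two useful ways. First, the paper imports the key radius relation $a=(\sqrt n+1)^2 b$ as Theorem 5.1 of an earlier reference, whereas you derive it on the spot from the chain condition $(n-1)r=a-2\sqrt{ar}$ (obtained by placing the touch points of $\beta_1$ and $\beta_n$ with $t$ via the $2\sqrt{uv}$ tangency lemma and matching against the $2(n-1)r$ spacing of the chain); your exclusion of the spurious root $(\sqrt n-1)^2r$ via $a>(n-1)r$ is also cleaner than the paper's sign check. Second, for part (i) the paper computes $|BC|$ as a chord of $\alpha_1$ using a right triangle with the perpendicular from the centre of $\alpha_1$ to $BC$, while you simply read off the coordinates of $B$ and $C$ as the points dividing the relevant centre segments in ratio $a:r$ and subtract; both then reduce (i) and (ii) to the same two identities $nr=(\sqrt a-\sqrt r)^2$ and $a=(\sqrt n+1)^2 r$. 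The point where the two arguments genuinely coincide is the location of the tangent point on the line of centres, which is how the paper gets $|AB|=2ab/(a+b)$ — the same value as your $y$-coordinate $2ar/(a+r)$. The net effect of your route is a fully self-contained proof at the cost of a slightly longer computation; no gaps.
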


\begin{proof}
Let $b$ be the radius of $\B_1$. By Theorem 5.1 in \cite{OKSJM171} we have 
\begin{equation}\label{eq1}
a=(\sqrt{n}+1)^2b. 
\end{equation} 
Let $d=|AB|$. Since $C$ divides the 
segment joining the centers of $\Al_1$ and $\B_1$ in the ratio $a:b$ 
internally, we have 
\begin{equation}\label{eq2}
\frac{d-b}{b}=\frac{a-b}{a+b}. 
\end{equation} 
Eliminating $b$ from \eqref{eq1} and \eqref{eq2}, and solving the resulting 
equation for $d$, we get $d=2a/(1+(1\pm\sqrt{n})^2)$. But in the minus sign 
case we get $2b-d=2a(1-4\sqrt{n})/(n^2-n+2\sqrt{n}+2)<0$ by \eqref{eq1}. 
Hence $d=2a/(1+(1+\sqrt{n})^2)$. This proves (ii). Let $|BC|=2h$. Then 
from the right triangle 
formed by the line $BC$, the segment joining the centers of $\Al_1$ and 
$\B_1$, and the perpendicular from the center of $\Al_1$ to $BC$, we get  
$(a-h)^2+(a-d)^2=a^2$. 
Solving the equation for $h$, we have $h=a-\sqrt{(2a-d)d}=an/(1+(1+\sqrt{n})^2)$. 
This proves (i). 
\end{proof}

\medskip
\begin{center}
\includegraphics[clip,width=115mm]{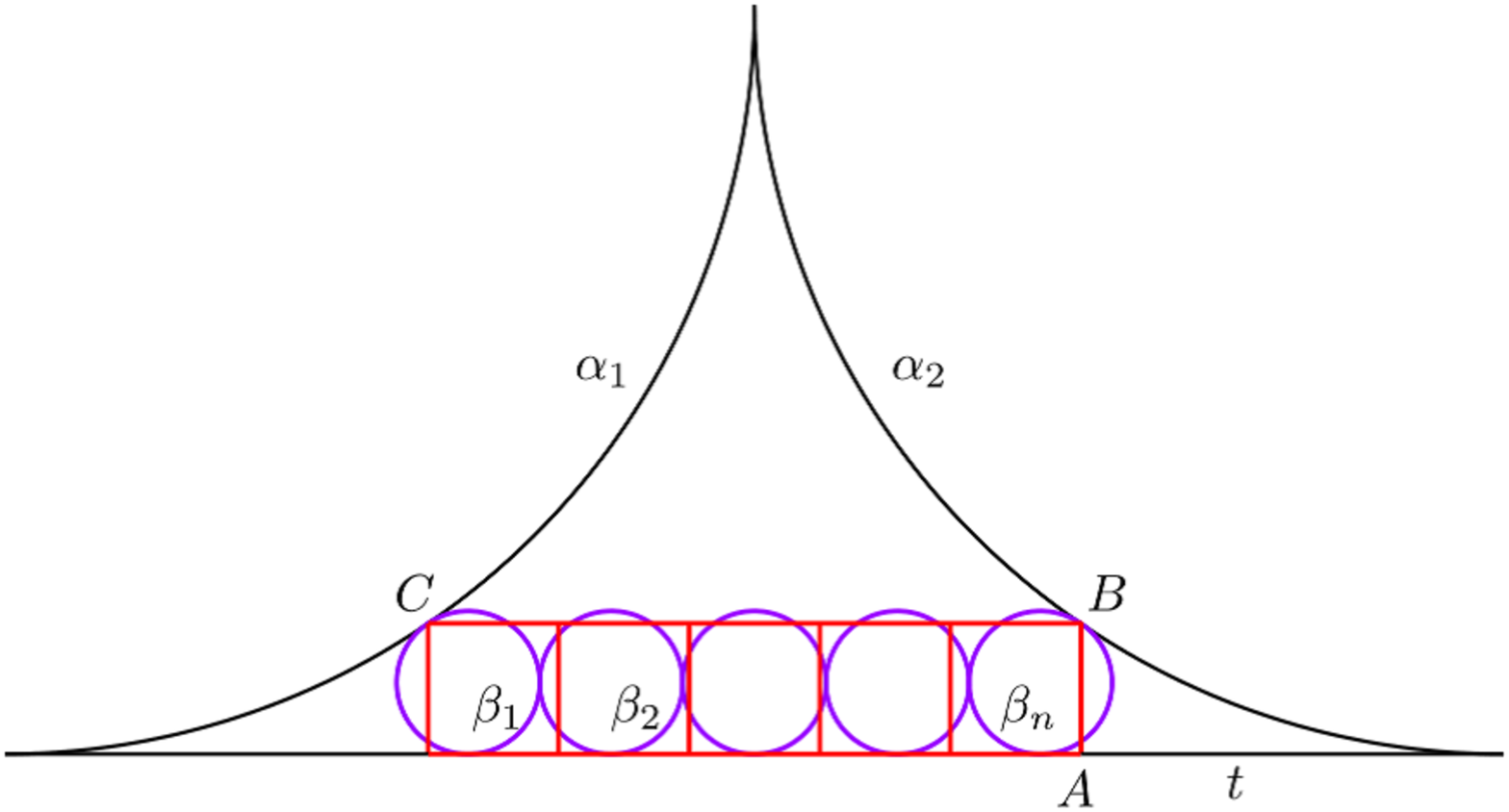}\refstepcounter{num}\label{f5}\\
Figure \Fg : $n=5$ 
\end{center}

The figure consisting of $\Al_1$, $\Al_2$, $\B_1$, $\B_2$, $\cdots$, $\B_n$ 
and $t$ is denoted by $\CB(n)$ and considered in \cite{OKSJM171}. Since the 
inradius of $\Delta$ equals $a/4$, the  next theorem is also a generalization 
of Problem \ref{p1} (see Figure \ref{fa4}). 

\begin{thm}\label{t2}
Let $\B_1$, $\B_2$, $\cdots$, $\B_n$ be congruent circles on a line $s$. 
If a circle $\Al$ of radius $a$ touches $s$ and $\B_1$ and $\B_n$ externally 
at points $C$ and $B$, respectively, $A$ is the foot of 
perpendicular from $B$ to $s$, then the following relations hold. \\ 
\noindent{\rm (i)} $(n-1)|AB|=|BC|$. \\
\noindent{\rm (ii)} $2a=((n-1)^2+4)|AB|/4$. 
\end{thm}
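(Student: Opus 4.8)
The plan is to run essentially the same elimination as in the proof of Theorem \ref{t1}, once the configuration has been normalised. Write $b$ for the common radius of $\B_1,\dots,\B_n$. The first step is the auxiliary relation
\begin{equation*}
a=\frac{(n-1)^2}{4}\,b.
\end{equation*}
To obtain it I would use the elementary fact that two circles of radii $r$ and $\rho$ tangent to a line $\ell$ from the same side and tangent to each other externally meet $\ell$ at two points a distance $2\sqrt{r\rho}$ apart (place the centres at $(0,r)$ and $(x,\rho)$; external tangency forces $x^2=4r\rho$). Applying this to the pairs $(\Al,\B_1)$ and $(\Al,\B_n)$ shows that the point where $\Al$ touches $s$ lies at distance $2\sqrt{ab}$ from the touch point of $\B_1$ and also from the touch point of $\B_n$; since $\B_1,\dots,\B_n$ are congruent circles on $s$ their touch points on $s$ are equally spaced at distance $2b$, so the touch point of $\Al$ is the midpoint of the two extreme ones, which are $2(n-1)b$ apart. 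Hence $4\sqrt{ab}=2(n-1)b$. (If \cite{OKSJM171} already records this as it records \eqref{eq1}, one could simply quote it.) The same midpoint observation shows the whole figure is symmetric in the perpendicular to $s$ through the centre of $\Al$, this reflection interchanging $\B_i$ with $\B_{n+1-i}$ and hence $C$ with $B$; in particular $C$ and $B$ are the same distance from $s$.

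For part (ii) I would set $d=|AB|$ and argue exactly as for \eqref{eq2}: $B$ lies on the segment joining the centre of $\B_n$ to the centre of $\Al$ and divides it in the ratio $b:a$, so, distance to $s$ being an affine function taking the values $b$ and $a$ at those two centres,
\begin{equation*}
\frac{d-b}{b}=\frac{a-b}{a+b},\qquad\text{equivalently}\qquad d=\frac{2ab}{a+b}.
\end{equation*}
Eliminating $b$ by the auxiliary relation gives $d=8a/((n-1)^2+4)$, i.e. $2a=((n-1)^2+4)|AB|/4$.

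For part (i) I would use the symmetry from the first step: the line $BC$ is parallel to $s$ at distance $d$, and since the centre of $\Al$ lies on the axis of symmetry, the foot $F$ of the perpendicular from that centre to $BC$ is the midpoint of $BC$. The right triangle with vertices the centre of $\Al$, the point $F$, and $C$ then has hypotenuse $a$ (a radius of $\Al$), one leg $|a-d|$ and the other leg $|BC|/2$, so $(|BC|/2)^2+(a-d)^2=a^2$, whence $|BC|=2\sqrt{d(2a-d)}$. Substituting $2a-d=(n-1)^2d/4$ from part (ii) yields $|BC|=(n-1)d=(n-1)|AB|$.

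The only genuinely delicate point is the first step: one has to be sure $\Al$ is the circle nestled "in the valley" over the chain, as in Figure \ref{fa4} (so that its touch point on $s$ falls between those of $\B_1$ and $\B_n$ and the reflection symmetry applies), rather than some other circle tangent to $s$, $\B_1$ and $\B_n$. Everything after that is the routine elimination already carried out in the proof of Theorem \ref{t1}, and could alternatively be done by putting $s$ on the $x$-axis, the centre of $\B_k$ at $(2(k-1)b,b)$, and reading off all the coordinates directly.
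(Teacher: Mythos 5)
Your proof is correct and follows exactly the route the paper intends: it sketches Theorem \ref{t2} as ``proved in a similar way as Theorem \ref{t1} using the fact that the ratio of the radii of $\Al$ and $\B_1$ equals $(n-1)^2:4$,'' and your elimination via $d=2ab/(a+b)$ and the right triangle on the axis of symmetry is precisely that argument. The only difference is that you derive the ratio $a:b=(n-1)^2:4$ from the tangent-length formula $2\sqrt{r\rho}$ rather than citing \cite{OKMIQ}, which makes the proof self-contained (and your worry about the touch point of $\Al$ is easily settled: a point of $s$ equidistant from the two extreme touch points must be their midpoint).
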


\medskip
\begin{center}
\includegraphics[clip,width=105mm]{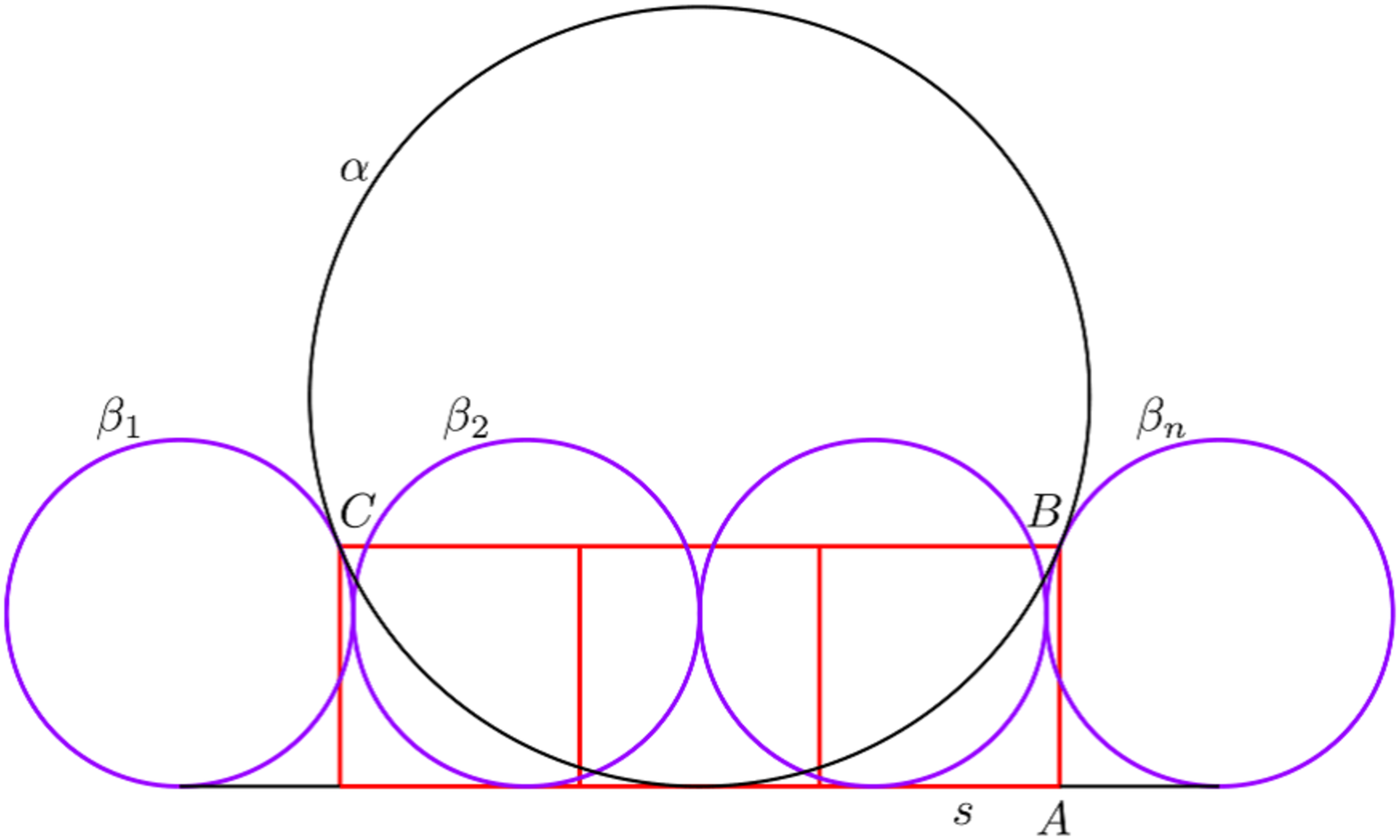}\refstepcounter{num}\label{fa4}\\
Figure \Fg : $n=4$ 
\end{center}

Theorem \ref{t2} is proved in a similar way as Theorem \ref{t1} using the 
fact that the ratio of the radii of $\Al$ and $\B_1$ equals $(n-1)^2:4$ 
\cite{OKMIQ}. 
The figure consisting of $\Al$, $\B_1$, $\B_2$, $\cdots$, $\B_n$ 
and $s$ is denoted by $\CA(n)$ and considered in \cite{OKSJM171}.  

\medskip

\end{document}